\documentclass[11pt]{article}

\usepackage{amsmath, amsfonts, amsthm, graphicx,amssymb}
\usepackage[top=21mm, bottom=22mm, left=21mm, right=21mm]{geometry}
\usepackage{comment}
\usepackage{hyperref}
\hypersetup{
	colorlinks=true,
	linkcolor=blue,
	filecolor=magenta,      
	urlcolor=cyan,
	citecolor=blue
}
\usepackage{authblk}
\usepackage{enumitem}
\usepackage[none]{hyphenat}
\usepackage{tikz, tikz-3dplot}
\usetikzlibrary{calc, patterns,intersections, scopes}
\usetikzlibrary{positioning,arrows,shapes,decorations.markings,decorations.pathreplacing, decorations.pathmorphing,matrix,patterns}
\tikzstyle{vtx}=[circle,draw=black,fill=black,inner sep=0,minimum size=5pt,text=white,font=\footnotesize]

\newtheorem{theorem}{Theorem}

\newtheorem{lemma}[theorem]{Lemma}

\usepackage{cleveref}
\crefname{lemma}{lemma}{lemmas}
\Crefname{lemma}{Lemma}{Lemmas}
\crefname{claim}{claim}{claims}
\Crefname{claim}{Claim}{Claims}
\crefname{question}{question}{questions}
\Crefname{question}{Question}{Questions}

\theoremstyle{definition}

\DeclareMathOperator{\bx}{box}

\title{Box-Delaunay graphs of large chromatic number}
\author{Istv\'an Tomon}
\affil{Ume\r{a} University}
\affil{\texttt{istvan.tomon@umu.se}}
\date{}

\begin{document}

\maketitle
\sloppy

\begin{abstract}
   For every $n$, we construct a 2-dimensional $n$-element poset whose Hasse diagram has independence number $n\exp(-\Omega(\sqrt{\log n}))$, and consequently, chromatic number $\exp(\Omega(\sqrt{\log n}))$. This also yields an $n$-point planar set whose box-Delaunay graph satisfies the same bounds.
\end{abstract}

\section{Introduction}
Given a finite set of points $P$ in the plane in general position, the \emph{Delaunay graph} of $P$ is the graph on vertex set $P$, where $x$ and $y$ are joined by an edge if there exists a disk containing $x$ and $y$, but no other element of $P$. The importance of Delaunay graphs stems from the fact that they are planar, defining the \emph{Delaunay triangulation} of $P$, which is the dual of the Dirichlet-Voronoi diagram of $P$ (see e.g. \cite{BKOS}). Moreover, the Delaunay graph is closely related to conflict-free colorings. Here, a \emph{conflict-free} coloring of $P$ is a coloring such that every disk intersecting $P$ contains a point of unique color. Such colorings are motivated by frequency assignment problems in communication networks. Clearly, in a conflict-free coloring, every color class must be an independent set in the Delaunay graph. On the other hand, Even, Lotker, Ron, and Smorodinsky \cite{ELRS} proved that every set of $n$ points has a conflict-free coloring with $O(\log n)$ colors, building on the fact that the Delaunay graph is planar, thus contains an independent set of size $\Omega(n)$. 

Delaunay graphs and conflict-free colorings with respect to other geometric objects are also extensively studied. Of particular interest are \emph{box-Delaunay graphs}, also known as \emph{rectangle-visibility} graphs. Given a finite set of points $P\subset \mathbb{R}^2$, the \emph{box-Delaunay graph} of $P$, denoted by $D_{\bx}(P)$, is the graph on vertex set $P$ in which $x$ and $y$ are joined by an edge if there exists an axis-parallel rectangle containing $x$ and $y$, but no other element of $P$. Motivated by conflict-free colorings with respect to axis-parallel rectangles,  Even, Lotker, Ron, and Smorodinsky \cite{ELRS} and Har-Peled and Smorodinsky \cite{HS} asked whether every $n$-vertex box-Delaunay graph contains an independent set of size $\Omega(n)$. This was disproved by Chen, Pach, Szegedy, and Tardos \cite{CPST}, who showed that a random $n$-point set $P\subset [0,1]^2$ satisfies $\alpha(D_{\bx}(P))\leq O(n(\log \log n)^2/\log n)$ with high probability, where $\alpha(.)$ denotes the independence number. Recently, Jin, Kwan and Lichev \cite{JKL} sharpened this to  $\alpha(D_{\bx}(P))=\Theta(n(\log \log n)/\log n)$ with high probability.  On the other hand, Ajwani, Elbassioni, Govindarajan, and Ray \cite{AEGR} proved that every $n$-vertex box-Delaunay graph contains an independent set of size at least $\Omega(n^{0.617})$. It remains an intriguing problem (see e.g. \cite{CPST}) whether every $n$-vertex box-Delaunay graph contains an independent set of size $n^{1-o(1)}$. Box-Delaunay graphs of higher dimensional point sets are also studied \cite{JKL,Tom24}, where similar gaps persist.

In this paper, we construct planar point sets whose box-Delaunay graph has independence number substantially smaller than the previously best known $n(\log n)^{-1+o(1)}$ bound coming from  random point sets. This also implies a lower bound on the largest chromatic  numbers of box-Delaunay graphs.  Given an $n$-vertex graph $G$, we denote by $\chi(G)$ its chromatic number. Then we have the following simple inequality: $\chi(G)\geq n/\alpha(G)$.

\begin{theorem}\label{thm:main1}
    For $n\in \mathbb{Z}^+$, there exists an $n$-element set in $\mathbb{R}^2$ with box-Delaunay graph $G$ satisfying $$\alpha(G)\leq n2^{-c\sqrt{\log n}}\quad\text{and}\quad\chi(G)\geq 2^{c\sqrt{\log n}}\quad (c>0\text{ is an absolute constant}).$$
\end{theorem}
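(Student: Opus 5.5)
Theorem~\ref{thm:main1} follows from a statement about posets, as the abstract indicates. A point set $P\subset\mathbb{R}^2$ in general position carries the dominance order: $x\le y$ iff $x_1\le y_1$ and $x_2\le y_2$. Every $2$-dimensional poset arises this way. If $x<y$ is a cover relation in this order, then the axis-parallel rectangle spanned by $x$ and $y$ contains no other point of $P$. Otherwise an intermediate point $z$ with $x<z<y$ would lie in it. Hence the Hasse diagram of $(P,\le)$ is a subgraph of $D_{\bx}(P)$, so $\alpha(D_{\bx}(P))\le\alpha(\text{Hasse})$. The bound $\chi\ge n/\alpha$ then gives the chromatic statement. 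The real task is therefore to build an $n$-element $2$-dimensional poset whose Hasse diagram has independence number $n2^{-c\sqrt{\log n}}$. Arbitrary $n$ is handled by taking the construction at the nearest admissible size and padding; adding or deleting points changes the bounds only by constants.

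For the construction I would use an iterated product or substitution. I would realise the poset as a subset of a grid $[m]^k$ with $k\approx\sqrt{\log n}$ and $m=2^{\Theta(\sqrt{\log n})}$, so that $n\approx m^k$. It is then embedded into two dimensions by a lexicographic-type map: coordinates are combined so that comparisons are decided at the first differing "level", with the two planar coordinates using orders that agree or disagree per level. Any 2-dimensional order of this form is realised by points.

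The key property I want is \emph{robustness at each level}. Fix an independent set $I$, and consider any "fibre", meaning the points agreeing in all but one level. Within such a fibre, the chain of covers along that level forces $I$ to miss a constant fraction $\delta$ of the points. The fibres must be arranged so that covers inside a fibre remain covers in the whole poset, i.e.\ no point from another branch sneaks in between them. Controlling these interlopers is why I would randomise the offsets or interleavings at each level, as in the random point-set arguments of Chen--Pach--Szegedy--Tardos.

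Given that property, a density-increment or entropy argument across the $k$ levels shows $|I|\le(1-\delta)^k n$ up to lower-order losses. With $k\sim\sqrt{\log n}$ this is $n2^{-c\sqrt{\log n}}$. The $\sqrt{\log n}$ arises from balancing: each level needs $m$ large (about $2^{k}$ or so) for the random interleaving to make the within-level covers survive with good probability, and $n=m^k$ then gives $k^2\approx\log n$.

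The main obstacle I expect is the one just flagged: ensuring that Hasse edges at level $j$ are not destroyed by points coming from the deeper levels, while the dimension stays $2$. I would first try a recursive blow-up. Replace each point of a level-$(j-1)$ configuration by a small, randomly rotated copy of a "staircase plus antichain" gadget. Then show that, for a typical pair, the spanning rectangle at level $j$ contains no other point with probability bounded below, losing a factor of $2^{-O(j)}$ at level $j$. These losses compound over the levels and are exactly what limits $k$ to $\sqrt{\log n}$.
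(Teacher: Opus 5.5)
Your reduction (the Hasse diagram of the dominance order is a subgraph of $D_{\bx}(P)$, then $\chi\ge n/\alpha$) is how the paper derives this theorem from its permutation result. The passage to arbitrary $n$ is routine; the paper uses a direct sum of copies.

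\textbf{The counting step fails.} You propose that inside every fibre an independent set misses a $\delta$-fraction of the points, and that this compounds to $|I|\le(1-\delta)^k n$. That implication is false. Take $[m]^k$ with $m$ even and put a path on every axis-parallel line. The points with even coordinate sum form an independent set that meets every line in exactly half its points, yet it has density $1/2$, not $2^{-k}$. No deterministic per-fibre property compounds like this.

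The paper's mechanism is genuinely probabilistic. Each level-$j$ slice $L$ carries an independent noise gadget. Its Hasse diagram has an independent set of size about $m/2$ (the part $A$), so it is not robust at all in your sense. But a \emph{fixed} set $I$ is independent on $L$ with probability only $(|I\cap L|+1)2^{-|I\cap L|}$. Condition on $\pi_{j-1}$, multiply over the slices and the $k$ levels, and apply AM--GM. This gives $\mathbb{P}(I\text{ independent})\le 2^{-k|I|}(|I|m/n+1)^{kn/m}\le 2^{-k|I|/2}$ once $|I|\ge 6n/m$. A union bound over the $\binom{n}{t}\le(en/t)^t$ sets of size $t$ then finishes.

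This is also where $\sqrt{\log n}$ comes from, not from per-level losses. The gain $2^{-k/2}$ per element must beat the entropy factor $en/t$. The $+1$ terms force $t\gtrsim n/m$, since a slice meeting $I$ once is always independent. Together with $n=m^k$, this makes $m=2^{k/2}$ optimal.

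\textbf{The interloper obstacle points the wrong way.} Suppose both coordinates compare at the first differing level in the same priority order, as in a substitution or blow-up. Then a cover $x<y$ between points differing only at level $j$ is destroyed by every $z$ that agrees with $x$ through level $j$ and lies above $x$ at a finer level. This kills all but a vanishing fraction of such covers deterministically, so no random offsets make them survive with probability bounded below.

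The paper removes the issue by reversing the priority between the two coordinates. The level-$j$ gadget acts across the $m$ sub-blocks of $J_{j,a}$, i.e.\ on the $j$-th least significant position digit. But through $\pi_j=m\pi_{j-1}+\beta$ it writes the $j$-th most significant digit of the value. Consequently the slice $K^{(j)}_{a,s}$ is exactly the set of points of $P_\pi$ in the rectangle $J_{j,a}\times\{m^{k-j+1}s,\dots,m^{k-j+1}(s+1)-1\}$. Hence every Hasse edge of $\pi|_K$ is an edge of $G$ with no loss at all, and the whole difficulty moves to the probabilistic counting above.
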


Box-Delaunay graphs are closely related to Hasse diagrams. Given a partially ordered set $(P,\prec)$, its \emph{Hasse diagram} is the graph on vertex set $P$, in which a comparable pair of elements $x\prec y$ is joined by an edge if there is no $z\in P$ such that $x\prec z\prec y$. In case $P$ is a set of points in the plane, one can define two partial orders $\prec_1$ and $\prec_2$ on $P$: if $(x_1,y_1),(x_2,y_2)\in P$, then $(x_1,y_1)\preceq_1 (x_2,y_2)$ if $x_1\leq x_2$ and $y_1\leq y_2$, and $(x_1,y_1)\preceq_2 (x_2,y_2)$ if $x_1\leq x_2$ and $y_2\leq y_1$. A poset isomorphic to either $(P,\prec_1)$ or $(P,\prec_2)$ for some set of points $P$ is called a \emph{2-dimensional} poset. The box-Delaunay graph $D_{\bx}(P)$ is the union of the Hasse diagrams of $(P,\prec_1)$ and $(P,\prec_2)$. In particular, this implies that the independence number of the Hasse diagram of $(P,\prec_1)$ is an upper bound on $\alpha(D_{\bx}(P))$. 

The chromatic number of Hasse diagrams has been studied since the 1970's. This question is closely tied to the independence number. It was first proved by Bollob\'as  \cite{Bollobas} that there exists an infinite family of posets (more precisely lattices) whose chromatic number tends to infinity with the number of vertices $n$. An alternative construction was provided later by Ne\v{s}et\v{r}il and R\"odl \cite{NR}. A stronger result, due to Brightwell and Ne\v{s}et\v{r}il \cite{BN}, states that there are $n$-element posets whose Hasse diagrams have independence number $o(n)$. In 2021, Suk and Tomon \cite{SukTom21} constructed $n$-vertex Hasse diagrams with chromatic number $\Omega(n^{1/4})$ and independence number $O(n^{3/4})$.

In the case of 2-dimensional posets,  K\v{r}\'i\v{z} and Ne\v{s}et\v{r}il \cite{KN} constructed a family with chromatic number tending to infinity. See also Dam\'asdi \cite{Damasdi} for an alternative construction. Matou\v{s}ek and P\v{r}\'iv\v{e}tiv\'y \cite{MP} asked whether there also exist 2-dimensional $n$-element posets whose Hasse diagram has independence number $o(n)$. This was answered affirmatively by Chen, Pach, Szegedy, and Tardos \cite{CPST}, who proved that the Hasse diagram of the 2-dimensional poset generated by a random point set has independence number $O(n(\log \log n)^2/\log n)$ with high probability, which was sharpened to $\Theta(n(\log \log n)/\log n)$ in \cite{JKL}. However, it remains open whether 2-dimensional posets can achieve similar chromatic and independence numbers as the above mentioned Hasse diagrams constructed by Suk and Tomon \cite{SukTom21}. 

The main result of this paper is a construction of a permutation $\pi\in S_n$ such that the Hasse diagram of the  2-dimensional poset induced by $P_{\pi}=\{(i,\pi(i)):i\in [n]\}$ (which we simply refer to as the Hasse diagram of the permutation $\pi$)  has fairly small independence number, and thus fairly large chromatic number. The following theorem also immediately implies \Cref{thm:main1} by recalling that the Hasse diagram of $P_{\pi}$ is a subgraph of $D_{\bx}(P_{\pi})$.

\begin{theorem}\label{thm:main}
    For $n\in \mathbb{Z}^+$, there exists an $n$-element permutation, whose Hasse diagram $G$ satisfies $$\alpha(G)\leq n2^{-c\sqrt{\log n}}\quad\text{and}\quad\chi(G)\geq 2^{c\sqrt{\log n}}\quad (c>0\text{ is an absolute constant}).$$
\end{theorem}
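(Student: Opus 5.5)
We build the permutation by an iterated ``blow-up'' (substitution) construction, tuned so that every large set of points contains many Hasse edges, and run the density argument for $k\approx\sqrt{\log n}$ rounds. Since $\chi(G)\geq n/\alpha(G)$, only the independence bound needs proof.

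\textbf{Base gadget.} Fix a parameter $m$. We want a pattern $\sigma_0\in S_m$ and a decomposition of its points into $m$ ``blocks'' with the following property: if we select, for each block, a subset of points in a prescribed shape (say a decreasing sequence), then in the resulting point set many pairs of points in different blocks are Hasse edges. A natural candidate is a grid-like arrangement of $m$ decreasing chains (antichains of $\prec_1$) placed along an increasing diagonal, but slightly sheared. The shear should make consecutive chains interleave, so that each point of chain $j$ covers some point of chain $j-1$ with nothing in between.

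\textbf{Iteration.} Define $\pi_{t+1}$ from $\pi_t$ by replacing every point of a base pattern with a small, suitably scaled and sheared copy of $\pi_t$. After $k$ levels, $n=m^{k}$ or so. The main inductive statement concerns a subset $I$ independent in the Hasse diagram of density $\delta$ inside a copy of level $t$. It should say that either $I$ has density at most $(1-\varepsilon)\delta$ in some level-$(t-1)$ copy where it matters, or the level-$t$ structure forces an edge.

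Concretely, the potential function I would track is the average density of $I$ in sub-copies. I would aim for a density-decrement lemma: an independent set of density $\delta$ in a level-$t$ block has density at most $\delta(1-c/\log m)$ or $\delta-c'$ when restricted suitably. The mechanism is that two ``adjacent'' sub-blocks cannot both be dense, because the shear creates cover relations between the top-right boundary of one and the bottom-left of the next. Those boundary points are not dominated by intermediate points, so if both sub-blocks are dense, a covered pair lies inside $I$.

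\textbf{Counting rounds.} With $m=2^{\Theta(\sqrt{\log n})}$ and $k=\Theta(\sqrt{\log n})$ levels, a constant-factor density loss per level gives $\alpha\leq n\,2^{-\Omega(k)}=n2^{-c\sqrt{\log n}}$.

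\textbf{Main obstacle.} The hard step is ensuring that edges between sub-copies are genuine Hasse edges when only $I$ is considered, rather than pairs blocked by points of the full permutation lying between them. Insertion of later levels creates many intermediate points that destroy cover relations. I would handle this by making the shear very thin at each scale, so that the region strictly between a boundary point of one block and a point of the next contains points only from those two blocks. I would then apply a Ramsey/density argument within the two blocks: among dense subsets, choose a $\prec_1$-minimal element above a given point. This element is a cover in the whole poset, and I would show it lies in $I$ with positive probability under averaging. Balancing the loss in this averaging against $m$ is what forces the $\sqrt{\log n}$ exponent rather than anything better.
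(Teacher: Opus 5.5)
Your parameter regime ($n=m^k$, $m=2^{\Theta(\sqrt{\log n})}$) matches the paper. However, the proposal has a genuine gap at its only substantive step. The base gadget is never specified, the density-decrement lemma (``two adjacent dense sub-blocks force a cover inside $I$'') is asserted rather than proved, and the mechanism you sketch for it does not work.

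If you take the $\prec_1$-minimal element of $I$ above $x$, it is a cover only in the subposet $I$, not in $\pi$. Indeed, if it were always a cover of $\pi$, every independent set of $G$ would be an antichain. If instead you take the minimal element of the whole two-block region, it is a genuine cover, but nothing puts it in $I$. Your construction is deterministic and $I$ is adversarial, so ``lies in $I$ with positive probability under averaging'' can only mean an expansion property of the cross-cover graph, which is exactly the missing lemma.

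That lemma is also delicate. The lower covers of any $z$ form an antichain, so $z$ has at most $\alpha_{t-1}|u|$ lower covers in a block $u$ whose Hasse diagram has independence ratio $\alpha_{t-1}$. You therefore need a bipartite cover graph of density at most $\alpha_{t-1}$ that still meets every pair of subsets of density of order $\alpha_{t-1}$. A thinly sheared deterministic blow-up, whose cross covers live near block boundaries, has no evident reason to have this pseudorandomness property.

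Finally, the bookkeeping does not close. A factor $1-c/\log m$ per level over $k=\Theta(\log m)$ levels is only a constant. If you had an $m$-independent constant-factor loss per level, you could take $m$ bounded and get $\alpha(G)\le n^{1-c}$. That would settle the open question whether every $n$-vertex box-Delaunay graph has an independent set of size $n^{1-o(1)}$. So whatever makes your lemma true must deteriorate with $m$, and you never say how.

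The missing idea is randomness plus a union bound over all $t$-sets; there is no per-set density decrement at all. In the paper, each interval $J_{j,a}$ of length $m^j$ splits into $m$ sub-intervals, each taking every value of $[m^{j-1}]$ under $\pi_{j-1}$. The $m$ points of $J_{j,a}$ with a common $\pi_{j-1}$-value form a slice with one point in every sub-interval, so the interleaving is complete rather than a thin shear. Each slice is refined by an independent random gadget: a uniformly random split $A\cup B$, with all values on $A$ below those on $B$ and both parts decreasing. Its Hasse diagram is the half graph $\{ab: a\in A,\ b\in B,\ a<b\}$.

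A slice occupies a contiguous value interval inside $J_{j,a}$, and later levels preserve the order type on $J_{j,a}$. Hence the gadget's covers remain covers of the final $\pi$, which disposes of your ``main obstacle'' with no geometry. Moreover, every point lies in one slice at each of the $k$ levels.

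A fixed $I$ survives the gadget on slice $K$ with probability $(|I\cap K|+1)2^{-|I\cap K|}$. The slices of a level partition $[n]$, and their gadgets are independent given $\pi_{j-1}$. Conditioning level by level and applying AM--GM gives $\mathbb{P}(I\text{ independent})\le 2^{-k|I|}(|I|m/n+1)^{kn/m}\le 2^{-k|I|/2}$ once $|I|\ge 6n/m$. The union bound against $\binom{n}{t}<(en/t)^t$, which is roughly $m^t$, then succeeds as soon as $m\le 2^{k/2}$. Taking $n=m^k$ and $m=2^{k/2}$ gives $\alpha(G)\le 6n/m=6n2^{-\sqrt{\log_2 n/2}}$. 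This balance between the union-bound cost and the gain $2^{-kt/2}$ is where the $\sqrt{\log n}$ actually comes from.
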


\section{Permutations with large chromatic number}

 Slightly unconventionally, we write $[n]=\{0,\dots,n-1\}$. In this section, we prove \Cref{thm:main}. In particular, we show the existence of a permutation $\pi$ of $[n]$, whose Hasse diagram has independence number at most $n2^{-\Omega(\sqrt{\log n})}$, as then this implies the appropriate bound on the chromatic number as well. We recall that the Hasse diagram of a permutation $\pi$ on $[n]$ is the graph on vertex set $[n]$, in which vertices $i,j\in [n]$ with $i<j$ are connected by an edge if $\pi(i)<\pi(j)$ and there exists no $\ell$ such that $i<\ell<j$ and $\pi(i)<\pi(\ell)<\pi(j).$
 
 In order to prove our desired result, we only consider values of $n$ which can be written as $n=2^{k^2/2}$ for some even integer $k$. Then the general result also follows: let $n_0\leq n$ be the largest integer of the form $2^{k^2/2}$, and let $\rho_0$ be a permutation on $n_0$ elements, whose Hasse diagram has independence number at most $n_02^{-\Omega(\sqrt{\log n_0})}$. Let $\pi$ be the permutation we get by taking the direct sum of $N=\lceil n/n_0\rceil$ copies of $\rho_0$, and deleting the final $n_0N-n$  elements to get a permutation of size exactly $n$. Then the independence number of the Hasse diagram of $\pi$ is at most $\lceil n/n_0\rceil\cdot n_02^{-\Omega(\sqrt{\log n_0})}=n2^{-\Omega(\sqrt{\log n})}$.

In the rest of the proof, we fix an even positive integer $k$, set $$m=2^{k/2}\quad\text{and}\quad n=m^k=2^{k^2/2}.$$ The idea of our construction is as follows. First, we define a ''noise gadget'', sampled from a simple probability distribution on permutations with bipartite Hasse diagrams. Then, we define a sequence of functions $\pi_0,\dots,\pi_k:[n]\rightarrow [n]$ as follows. We start with  $\pi_0\equiv 0$. Then, if $\pi_{j-1}$ is already defined for $j\geq 1$, we construct $\pi_{j}$ by perturbing the level sets of $\pi_{j-1}$ using independent samples of the noise gadget. Our final function $\pi=\pi_k$ will be the desired permutation of $[n]$ with high probability. We start by defining our noise gadget.

\medskip

\noindent
\textbf{Noise gadget.} Let $\alpha$ be the following random permutation of $[m]$. Partition $[m]$ randomly into two sets $A$ and $B$ such that each element of $[m]$ belongs to either $A$ or $B$ independently with probability $1/2$. Then $\alpha$ is the unique permutation which satisfies that $\tau(a)<\tau(b)$ for every $(a,b)\in A\times B$, and $\tau|_A$ and $\tau|_B$ are both monotone decreasing. See \Cref{fig:1} for an illustration.
\begin{figure}[ht]
\begin{center}
\begin{tikzpicture}[scale=0.4]

  \node[circle, draw, fill=red, inner sep=2pt] (v1) at (0,4) {};
  \node[circle, draw, fill=blue, inner sep=2pt] (v2) at (1,9) {};
  \node[circle, draw, fill=blue, inner sep=2pt] (v3) at (2,8) {};
  \node[circle, draw, fill=red, inner sep=2pt] (v4) at (3,3) {};
  \node[circle, draw, fill=red, inner sep=2pt] (v5) at (4,2) {};
  \node[circle, draw, fill=blue, inner sep=2pt] (v6) at (5,7) {};
  \node[circle, draw, fill=blue, inner sep=2pt] (v7) at (6,6) {};
  \node[circle, draw, fill=red, inner sep=2pt] (v8) at (7,1) {};
  \node[circle, draw, fill=blue, inner sep=2pt] (v9) at (8,5) {};
  \node[circle, draw, fill=red, inner sep=2pt] (v10) at (9,0) {};
  \draw (v1) -- (v2);
  \draw (v1) -- (v3);
  \draw (v1) -- (v6);
  \draw (v1) -- (v7);
  \draw (v1) -- (v9);
  \draw (v4) -- (v6);
  \draw (v4) -- (v7);
  \draw (v4) -- (v9);
  \draw (v5) -- (v6);
  \draw (v5) -- (v7);
  \draw (v5) -- (v9);
  \draw (v8) -- (v9);
\end{tikzpicture}
\end{center}
\caption{An example of a noise gadget and its Hasse diagram. The permutation $(4983276150)$ is a possible outcome for $m=10$.  The red points are $A=\{0,3,4,7,9\}$, the blue points are $B=\{1,2,5,6,8\}$.}
\label{fig:1}
\end{figure}

 The main advantage of the noise gadget is that every subset of $[m]$ has a small probability of being an independent set in the Hasse diagram.

\begin{lemma}\label{lemma:gadget}
For $I\subset [m]$, the probability that $I$ is independent in the Hasse diagram of $\alpha$ is  $(|I|+1)2^{-|I|}$.
\end{lemma}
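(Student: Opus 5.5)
The plan is to first describe the Hasse diagram of $\alpha$ exactly, and then count the colorings of $I$ that make it independent. Points of $A$ sit in the lower value block, occupying positions in increasing order with decreasing values. Points of $B$ sit in the upper block, also with decreasing values. Hence two points of $A$ are never comparable, and neither are two points of $B$. A pair $i<j$ is comparable exactly when $i\in A$ and $j\in B$; the reverse pattern $i\in B$, $j\in A$ gives $\alpha(i)>\alpha(j)$. I then claim that every such comparable pair $a<b$ with $a\in A$, $b\in B$ is in fact an edge of the Hasse diagram. An intermediate element $\ell$ with $a<\ell<b$ and $\alpha(a)<\alpha(\ell)<\alpha(b)$ would have to lie in $B$ to sit above $a$, and in $A$ to sit below $b$. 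Both are impossible, since it cannot lie in both. So the Hasse diagram consists of exactly the pairs $a<b$ with $a\in A$, $b\in B$, which matches \Cref{fig:1}.

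With this description, $I$ is independent if and only if no element of $I\cap A$ precedes an element of $I\cap B$. Write the elements of $I$ in increasing order as $i_1<\dots<i_s$, where $s=|I|$. Independence then says that the labels in $A$ or $B$ along this sequence read $B,\dots,B,A,\dots,A$: some initial segment lies in $B$ and the rest lies in $A$. There are exactly $s+1$ such labelings, one for each length $t\in\{0,\dots,s\}$ of the $B$-prefix. Since each element is placed in $A$ or $B$ independently with probability $1/2$, each labeling of $I$ has probability $2^{-s}$. The probability of independence is therefore $(s+1)2^{-s}$.

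The only step that needs care is verifying that every comparable $A$--$B$ pair is a cover relation, that is, that there is no intermediate $\ell$. This is the short case check above. I would also note that elements outside $I$ do not affect independence of $I$, since edges between members of $I$ depend only on their own labels.
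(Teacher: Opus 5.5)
Your proof is correct and follows the same route as the paper: both identify the edges of the Hasse diagram as exactly the pairs $a<b$ with $a\in A$, $b\in B$, and both note that $I$ is independent exactly when its labels, read in increasing order, form a $B$-prefix followed by an $A$-suffix, which gives $|I|+1$ labelings, each of probability $2^{-|I|}$. You also spell out two steps the paper leaves implicit, namely that every comparable $A$--$B$ pair is a cover relation and the explicit count of the admissible labelings.
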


\begin{proof}
Let $G$ be the Hasse diagram of $\alpha$, and let $(A,B)$ be the corresponding partition of $[m]$. For $a,b\in [m]$, $a$ and $b$ are joined by an edge in $G$ if $a\in A$, $b\in B$ and $a<b$. Therefore, $I$ is an independent set if and only if either $I\cap A=\emptyset$, or $I\cap B=\emptyset$, or every element of $I\cap B$ is smaller than every element of $I\cap A$. The probability of this is exactly $(|I|+1)2^{-|I|}$.
\end{proof}

 For $j\in [k+1]$ and $a\in [m^{k-j}]$, define the interval
$$J_{j,a}:=\{am^j+b:b\in [m^j]\}.$$ 
Note that $J_{j,0},\dots,J_{j,m^{k-j}-1}$ form a partition of $[n]$ into intervals of size $m^j$. Next, we define a sequence of random functions $$\pi_0,\dots,\pi_k:[n]\rightarrow [n]$$ such that for $j\in [k+1]$, $\pi_j$ has the following properties:
\begin{itemize}
    \item[(a)] if $a\in [m^{k-j}]$, then $\pi_j|_{J_{j,a}}$ is a permutation of $[m^j]$,
    \vspace{-5pt}\item[(b)] for every $i\in [j]$ and $a\in [m^{k-i}]$, $\pi_{i}|_{J_{i,a}}$ and $\pi_j|_{J_{i,a}}$ are isomorphic as permutations.
\end{itemize}
We start by setting $\pi_0\equiv 0$, then (a) is trivially satisfied, while (b) is vacuous. Assume that $\pi_{j-1}$ is already defined for some $j\in \{1,\dots,k\}$, and $\pi_{j-1}$ satisfies (a) and  (b). Then we define $\pi_j$ as follows. Let $a\in [m^{k-j}]$, and consider $\pi_{j-1}|_{J_{j,a}}$. The interval $J_{j,a}$ is partitioned into the $m$ intervals $J_{j-1,ma},\dots,J_{j-1,ma+m-1}$. On each of these intervals, $\pi_{j-1}$ is a permutation of $[m^{j-1}]$ by (a). Therefore, for every $s\in [m^{j-1}]$, the set 
$$K^{(j)}_{a,s}=\{x\in J_{j,a}: \pi_{j-1}(x)=s\}$$ 
has size $m$, as $K_{a,s}^{(j)}$ intersects $J_{j-1,ma+i}$ in a single element for $i\in [m]$. We refer to the sets $K_{a,s}^{(j)}$ as a \emph{level-$j$ slice}, or simply \emph{slice}. Write the elements of $K:=K_{a,s}^{(j)}$ as $x_0<\dots<x_{m-1}$. We sample the permutation $\beta=\beta^{(j)}_{a,s}$ from the distribution of $\alpha$, independently from all other samples, and set  $$\pi_j(x_t):=m\cdot \pi_{j-1}(x_t)+\beta(t)=ms+\beta(t)\quad\text{for }t\in [m].$$
As $(a,s)$ ranges through $[m^{k-j}]\times [m^{j-1}]$, this defines $\pi_j$ for every element of $[n]$. Moreover, we note that $\pi_j|_K$ is isomorphic to $\beta$ as a permutation.

We argue that $\pi_{j}$ satisfies (a) and (b). In order to show (a), it is enough to prove that  for every $a\in [m^{k-j}]$ and $b\in [m^j]$, there is an $x\in J_{j,a}$ such that $\pi_j(x)=b$. Write $b=ms+r$ as $s\in [m^{j-1}]$ and $r\in [m]$. Then there is a unique $t\in [m]$ such that $\beta_{a,s}^{(j)}(t)=r$, so writing $x_t$ for the $(t+1)$-st smallest element of $K^{(j)}_{a,s}$, we have $\pi_j(x_t)=b$. In order to see that (b) is true, observe that $\pi_{j-1}(x)=\lfloor \pi_{j}(x)/m\rfloor$ holds for every $x\in [n]$. By induction, this implies $\pi_i(x)=\lfloor \pi_j(x)/m^{j-i}\rfloor$ for every $i<j$.

Let $\pi=\pi_k$, then $\pi$ is a permutation of $[n]$ by (a). Next, we study the Hasse diagram $G$ of $\pi$. The main observation is that if $K=K^{(j)}_{a,s}$ is a slice, then the subgraph of $G$ induced on $K$ is isomorphic to the Hasse diagram of $\beta^{(j)}_{a,s}$. Indeed, $K\subset J_{j,a}$, and on the interval $J_{j,a}$, $\pi$ and $\pi_{j}$ are isomorphic as permutations by (b). Therefore, $G[J_{j,a}]$ is equal to the Hasse diagram of $\pi_j|_{J_{j,a}}$. Moreover, every $x\in K$ satisfies that $ms\leq \pi_j(x)<m(s+1)$, and no $x\in J_{j,a}\setminus K$ falls into the interval $\{ms,\dots,m(s+1)-1\}$. Thus, all edges of the Hasse diagram of the permutation $\pi_j|_{K}$  are also edges of the Hasse diagram of $\pi_j|_{J_{j,a}}$, and thus of $G$. See \Cref{fig:2} for an illustration.

\begin{figure}[ht]
\begin{center}
\begin{tikzpicture}[scale=0.3]

\begin{scope}[shift={(-11,0.0)},scale=1.0]
  \node[circle, draw, fill=red, inner sep=2pt] (v1) at (0,1) {};
  \node[circle, draw, fill=blue, inner sep=2pt] (v2) at (1,0) {};
  \node[circle, draw, fill=green, inner sep=2pt] (v3) at (2,3) {};
  \node[circle, draw, fill=yellow, inner sep=2pt] (v4) at (3,2) {};
  \node[circle, draw, fill=green, inner sep=2pt] (v5) at (4,3) {};
  \node[circle, draw, fill=red, inner sep=2pt] (v6) at (5,1) {};
  \node[circle, draw, fill=blue, inner sep=2pt] (v7) at (6,0) {};
  \node[circle, draw, fill=yellow, inner sep=2pt] (v8) at (7,2) {};
  \node[circle, draw, fill=red, inner sep=2pt] (v9) at (8,1) {};
  \node[circle, draw, fill=green, inner sep=2pt] (v10) at (9,3) {};
   \node[circle, draw, fill=blue, inner sep=2pt] (v11) at (10,0) {};
  \node[circle, draw, fill=yellow, inner sep=2pt] (v12) at (11,2) {};
  \node[circle, draw, fill=green, inner sep=2pt] (v13) at (12,3) {};
  \node[circle, draw, fill=red, inner sep=2pt] (v14) at (13,1) {};
  \node[circle, draw, fill=blue, inner sep=2pt] (v15) at (14,0) {};
  \node[circle, draw, fill=yellow, inner sep=2pt] (v16) at (15,2) {};
  \draw (v1) -- (v3); \draw (v1) -- (v4) ; \draw (v2) -- (v3) ; \draw (v2) -- (v4);
  \draw (v6) -- (v8); \draw (v7) -- (v8) ; 
  \draw (v9) -- (v10); \draw (v9) -- (v12) ; \draw (v11) -- (v12) ; 
  \draw (v14) -- (v16); \draw (v15) -- (v16) ; 

  \node at (1.5,-2) {$J_{1,0}$}; \draw[decorate, decoration={brace, amplitude=5pt,mirror}]
    (0,-0.5) -- (3,-0.5);
  \node at (5.5,-2) {$J_{1,1}$};\draw[decorate, decoration={brace, amplitude=5pt,mirror}]
    (4,-0.5) -- (7,-0.5);
  \node at (9.5,-2) {$J_{1,2}$};\draw[decorate, decoration={brace, amplitude=5pt,mirror}]
    (8,-0.5) -- (11,-0.5);
  \node at (13.5,-2) {$J_{1,3}$};\draw[decorate, decoration={brace, amplitude=5pt,mirror}]
    (12,-0.5) -- (15,-0.5);
\end{scope}

\begin{scope}[shift={(11,0.0)},scale=1.0]
  \node[circle, draw, fill=red, inner sep=2pt] (v1) at (0,7) {};
  \node[circle, draw, fill=blue, inner sep=2pt] (v2) at (1,1) {};
  \node[circle, draw, fill=green, inner sep=2pt] (v3) at (2,13) {};
  \node[circle, draw, fill=yellow, inner sep=2pt] (v4) at (3,11) {};
  \node[circle, draw, fill=green, inner sep=2pt] (v5) at (4,15) {};
  \node[circle, draw, fill=red, inner sep=2pt] (v6) at (5,5) {};
  \node[circle, draw, fill=blue, inner sep=2pt] (v7) at (6,0) {};
  \node[circle, draw, fill=yellow, inner sep=2pt] (v8) at (7,9) {};
  \node[circle, draw, fill=red, inner sep=2pt] (v9) at (8,4) {};
  \node[circle, draw, fill=green, inner sep=2pt] (v10) at (9,12) {};
   \node[circle, draw, fill=blue, inner sep=2pt] (v11) at (10,3) {};
  \node[circle, draw, fill=yellow, inner sep=2pt] (v12) at (11,10) {};
  \node[circle, draw, fill=green, inner sep=2pt] (v13) at (12,14) {};
  \node[circle, draw, fill=red, inner sep=2pt] (v14) at (13,6) {};
  \node[circle, draw, fill=blue, inner sep=2pt] (v15) at (14,2) {};
  \node[circle, draw, fill=yellow, inner sep=2pt] (v16) at (15,8) {};

  \draw (v6) -- (v14); \draw (v9) -- (v14);
  \draw (v2) -- (v11); \draw (v2) -- (v15) ; \draw (v7) -- (v11); \draw (v7) -- (v15);
  \draw (v8) -- (v12);
  \draw (v3) -- (v5); \draw (v3) -- (v13) ; \draw (v10) -- (v13);

  \draw[dashed] (v1) -- (v3); \draw[dashed] (v1) -- (v4) ; \draw[dashed] (v2) -- (v3) ; \draw[dashed] (v2) -- (v4);
  \draw[dashed] (v6) -- (v8); \draw[dashed] (v7) -- (v8) ; 
  \draw[dashed] (v9) -- (v10); \draw[dashed] (v9) -- (v12) ; \draw[dashed] (v11) -- (v12) ; 
  \draw[dashed] (v14) -- (v16); \draw[dashed] (v15) -- (v16) ; 
  \end{scope}
  
\end{tikzpicture}
\end{center}
\caption{An illustration of $\pi_1$ (left) and $\pi_2$ (right) for  parameters $m=4$, $k=2$, and $n=m^k=16$ (while $m\neq 2^{k/2}$, this choice  illustrates well our process). The 4 colors represent the level-2 slices. The edges on the left are the edges of the Hasse diagrams of level-1 slices. The edges on the right are the edges of the Hasse diagrams of level-1 slices (dashed edges) and level-2 slices (solid edges).}
\label{fig:2}
\end{figure}

\begin{lemma}
Let $I\subset [n]$. The probability that $I$ is an independent set in $G$ is at most 
$$2^{-|I|k}\left(\frac{|I|m}{n}+1\right)^{kn/m}.$$
\end{lemma}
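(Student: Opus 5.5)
If $I$ is independent in $G$, then for every slice $K=K^{(j)}_{a,s}$ the set $I\cap K$ is independent in $G[K]$. The plan is therefore to bound the probability that all these slice-wise events hold, using Lemma~\ref{lemma:gadget} for each slice. The difficulty is that the slices at level $j$ are defined via $\pi_{j-1}$, which is random. So I would reveal the construction level by level and condition on $\pi_{j-1}$. Given $\pi_{j-1}$, the level-$j$ slices are fixed sets, and the gadgets $\beta^{(j)}_{a,s}$ are independent of each other and of $\pi_{j-1}$. Since $G[K]$ is exactly the Hasse diagram of $\beta^{(j)}_{a,s}$ transported to $K$, Lemma~\ref{lemma:gadget} gives
$$\mathbb{P}\bigl(I\cap K\text{ independent in }G[K]\text{ for all level-}j\text{ slices }K \,\big|\, \pi_{j-1}\bigr)=\prod_{K}(|I\cap K|+1)2^{-|I\cap K|}.$$
Chaining the conditional probabilities over $j=1,\dots,k$ (the level-$j$ event is measurable with respect to $\pi_j$) then bounds the probability that $I$ is independent by the maximum over outcomes of the product over all levels.

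The next step is to bound each level's product deterministically. At every level the slices partition $[n]$: there are $m^{k-j}\cdot m^{j-1}=n/m$ slices of size $m$. Hence $\sum_K |I\cap K|=|I|$, and the $2^{-|I\cap K|}$ factors multiply to $2^{-|I|}$ at each level, giving $2^{-|I|k}$ over the $k$ levels. For the remaining factor, AM--GM over the $n/m$ slices gives
$$\prod_K(|I\cap K|+1)\le\Bigl(\frac{|I|m}{n}+1\Bigr)^{n/m}$$
regardless of how $I$ is distributed among the slices. Multiplying over $k$ levels yields exactly $2^{-|I|k}\bigl(\frac{|I|m}{n}+1\bigr)^{kn/m}$.

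The step needing the most care is the conditioning. The slices at level $j$ depend on earlier gadgets, so the events across levels are not independent. The key fact is that, once $\pi_{j-1}$ is fixed, the level-$j$ gadgets are fresh independent samples. The per-level bound is also uniform in $\pi_{j-1}$, since it depends only on the slice count and on $|I|$. I would therefore formalize the argument as an induction on $j$: the probability that the slice events hold for all levels up to $j$ is at most $\prod_{i\le j}\bigl(2^{-|I|}(|I|m/n+1)^{n/m}\bigr)$, using the tower property with the conditional bound at each step.
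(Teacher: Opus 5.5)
Your proposal is correct and takes essentially the same approach as the paper: condition on $\pi_{j-1}$ so the level-$j$ slices are fixed and their gadgets are independent, apply Lemma~\ref{lemma:gadget} slice by slice, use AM--GM over the $n/m$ slices, and chain over the $k$ levels using a bound that is uniform in $\pi_{j-1}$. The paper phrases the chaining as $\prod_j \mathbb{P}(\mathcal{A}_j\mid\mathcal{A}_1,\dots,\mathcal{A}_{j-1})$, averaged over the outcomes of $\pi_{j-1}$, which is the same as your tower-property induction.
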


\begin{proof}
We want to argue that the events $\{I\cap K\text{ is an independent set in the Hasse diagram of }\pi|_K\}$ are independent for the different slices $K$. As the slices themselves are dependent, we need to be somewhat careful with this statement, so we present a formal argument. 

By the previous, if $I$ is an independent set, then for every slice $K$, $I\cap K$ is an independent set in the Hasse diagram of $\pi|_{K}$. We denote by $H_K$ the Hasse diagram of $\pi|_{K}$. For $j=1,\dots,k$, let $\mathcal{A}_j$ be the event that $I\cap K$ is an independent set in $H_K$ for every level-$j$ slice $K$. Then
$$\mathbb{P}(I\text{ is an independent set in }G)\leq \mathbb{P}(\mathcal{A}_1\cap\dots\cap\mathcal{A}_k)=\prod_{j=1}^{k}\mathbb{P}(\mathcal{A}_j|\mathcal{A}_1,\dots,\mathcal{A}_{j-1}).$$
Here, the events $\mathcal{A}_1,\dots,\mathcal{A}_{j-1}$ are fully determined by $\pi_{j-1}$. Fixing any outcome $\tau$ of $\pi_{j-1}$, the level-$j$ slices are also fully determined, let $L_{a,s}^{(j)}$ denote the corresponding outcome of the slice $K_{a,s}^{(j)}$. Then we have
\begin{align*}
    \mathbb{P}(\mathcal{A}_j|\pi_{j-1}=\tau)&=\prod_{L} \mathbb{P}(I\cap L\text{ is an independent set in }H_{L})\\
    &=\prod_{L}  2^{-|I\cap L|}(|I\cap L|+1)\\
    &=2^{-|I|}\prod_{L}(|I\cap L|+1),
\end{align*}
where the product ranges over all level-$j$ slices $L=L_{a,s}^{(j)}$. The first equality holds by the independence of the permutations $\beta^{(j)}_{a,s}$, the second equality holds by \Cref{lemma:gadget}, and the third equality holds by noting that the level-$j$ slices partition $[n]$. We further bound the right-hand-side by the AM-GM inequality. As the number of level-$j$ slices is $n/m$, we can write
$$\prod_{L}(|I\cap L|+1)\leq \left(\frac{\sum_{L} |I\cap L|+1}{n/m}\right)^{\frac{n}{m}}=\left(\frac{|I|m}{n}+1\right)^{\frac{n}{m}}.$$
Thus, filtering the event $\mathcal{A}_1\cap\dots\cap\mathcal{A}_{j-1}$ by the possible outcomes $\tau$ of $\pi_{j-1}$, we get
$$\mathbb{P}(\mathcal{A}_j|\mathcal{A}_1,\dots,\mathcal{A}_{j-1})\leq 2^{-|I|} \left(\frac{|I|m}{n}+1\right)^{\frac{n}{m}}.$$
In conclusion,
$$\mathbb{P}(I\text{ is an independent set in }G)\leq\prod_{j=1}^{k}\mathbb{P}(\mathcal{A}_j|\mathcal{A}_1,\dots,\mathcal{A}_{j-1})\leq 2^{-k|I|} \left(\frac{|I|m}{n}+1\right)^{\frac{kn}{m}}.$$
\end{proof}

In order to bound the independence number of $G$, we simply apply the union bound. Let $t\geq \frac{6n}{m}$, and write $x:=\frac{tm}{n}\geq 6$. We note that $\frac{1}{2}\geq  \frac{\log_2 (x+1)}{x}$. For any $t$ element set $I\subset [n]$, the previous lemma implies 
\begin{align*}
\mathbb{P}(I\text{ is an independent set in }G)&\leq 2^{-kt}\left(\frac{tm}{n}+1\right)^{\frac{kn}{m}}=2^{-kt}(x+1)^{\frac{kt}{x}}\\
&=\exp_2\left(kt\left(-1+\frac{\log_2(x+1)}{x}\right)\right)\leq 2^{-kt/2}.
\end{align*}
Thus, we get
$$\mathbb{P}(\alpha(G)\geq t)\leq \binom{n}{t}2^{-\frac{kt}{2}}< \left(\frac{en}{t}\right)^t2^{-\frac{kt}{2}}=\left(\frac{en}{2^{k/2} t}\right)^t\leq \left(\frac{em}{6\cdot 2^{k/2}}\right)^t<2^{-t}.$$
This proves that with high probability, the Hasse diagram of $\pi$ has independence number at most $$\frac{6n}{m}=6n2^{-\sqrt{\frac{1}{2}\log_2 n}},$$ finishing the proof of \Cref{thm:main}.

\section*{AI declaration}
\vspace{-10pt}
All mathematical content is due to the author. ChatGPT 5.6 Pro was used for proof checking and improving writing.

\section*{Acknowledgments}
\vspace{-10pt}
IT acknowledges the support of the Swedish Research Council grant VR 2023-03375.

\end{document}